\newtheorem{theorem}{Theorem}[section]
\newcommand{\kare}{$\square$}
\begin{document}
	\title{A solution of a problem about Erd\H{o}s space}
	\author{S\"uleyman \"Onal$^{(a)}$, Servet Soyarslan\footnote{Corresponding author. \newline E-mail addresses: osul@metu.edu.tr (S. Önal), servet.soyarslan@gmail.com (S. Soyarslan). } 
	}
	
	\maketitle	
	
	{\scriptsize a. Middle East Technical University, Department of Mathematics, 06531 Ankara, Turkey. } 
		
\begin{abstract}
 For Erd\H{o}s space, $\mathfrak{E}$, let us define a topology, $\tau_{clopen}$, which is generated  by  clopen subsets of $\mathfrak{E}$.  A. V. Arhangel’skii and J. Van Mill  asked whether  the topology $\tau_{clopen}$   is compatible with the group structure on $\mathfrak{E}$.  In this paper, we give a negative answer for this question.
\end{abstract}

{\scriptsize \textbf{Keywords}: Erd\H{o}s space, topological groups, Sequence spaces.}
	
	{\scriptsize\textbf{MSC:}  22A99, 22A45, 46A45}

\section{Introduction and Terminology}
We let $\mathbb{Q}$, $\mathbb{R}$ and $\mathbb{R}^+$ denote the sets of rational numbers, real numbers and positive real numbers, respectively.  $\mathbb{N}^+$ denotes the set of positive natural numbers, i.e., $\mathbb{N}^+=\{1,2,3,\ldots\}$.    
Let us consider the Banach space $\ell_2\subseteq \mathbb{R}^{\mathbb{N}^+}.$ This space consists of  all sequences $x=(x_1,x_2,x_3,\ldots)\in \mathbb{R}^{\mathbb{N}^+}$ such that the series  $\sum_{k=1}^{\infty}|x_k|^2$ is convergent.
 The topology on  $\ell_2$ is generated by the norm $||x||=\sqrt{\sum_{k=1}^{\infty}|x_k|^2}$. The Erd\H{o}s space $\mathfrak{E}$ is a  subspace of $\ell_2$ such that $\mathfrak{E}$ consists of the all sequences, the  all components of which are  rational,  i.e.,  $\mathfrak{E}=\mathbb{Q}^{\mathbb{N}^+}\cap \ell_2$. The topology $\tau$ on  $\mathfrak{E}$ is the subspace topology inherited  from  $\ell_2$. 

In this paper our main space is $(\mathfrak{E},\tau)$. What we mean with an open  ball $B(x,r)$ is the set $B(x,r)=\{y\in \mathfrak{E}: ||x-y||<r\}$ where $r>0$. If we say "$O$ is an open set", we mean that $O\subseteq \mathfrak{E}$ and $O\in \tau$.  Let $O$ be a subset of $\mathfrak{E}$. We denote the \textit{interior}  of $O$ by $int(O)$, i.e., $int(O)=\{x\in O:\exists V_x\in \tau(x\in V_x\subseteq O)\}$.

We need the following basic facts. These  can be found in any proper book.

\begin{theorem}\label{T:1} (\cite{arhangel2008topological} Theorem 1.3.12) Let $G$ be a topological group and $e$ the identity element of $G$. If $U$ is an open subset of $G$ and $e\in U$, there is an open subset $V$ of $G$ such that $e\in V$ and $V+V\subseteq U$.
\end{theorem}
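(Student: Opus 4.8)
The plan is to invoke the continuity of the group operation. Denote the addition map by $m\colon G\times G\to G$, $m(x,y)=x+y$; by the definition of a topological group this map is continuous when $G\times G$ carries the product topology. Since $m(e,e)=e\in U$ and $U$ is open, the preimage $m^{-1}(U)$ is an open subset of $G\times G$ that contains the point $(e,e)$.

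Next I would unwind the definition of the product topology: any open subset of $G\times G$ containing $(e,e)$ must contain a basic open rectangle $W_1\times W_2$ with $W_1,W_2$ open in $G$ and $e\in W_1\cap W_2$. Applying this to $m^{-1}(U)$ yields open neighbourhoods $W_1,W_2$ of $e$ with $W_1+W_2=m(W_1\times W_2)\subseteq U$. Setting $V:=W_1\cap W_2$ then finishes the argument: $V$ is open as a finite intersection of open sets, $e\in V$, and $V+V\subseteq W_1+W_2\subseteq U$, which is exactly the assertion.

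There is essentially no obstacle here; this is the standard ``halving of a neighbourhood of the identity'' lemma, and the only points requiring care are purely notational — using the correct basis for the product topology, and intersecting the two sides of the rectangle so that a single set $V$ serves on both sides of the sum. (Since the group of interest in this paper, $\mathfrak{E}$, is written additively, we phrase the lemma additively; the multiplicative version found in \cite{arhangel2008topological} is the same statement under the obvious change of notation.)
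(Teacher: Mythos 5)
Your argument is correct and is exactly the standard proof of this lemma; the paper itself states this result without proof, simply citing \cite{arhangel2008topological}, and your continuity-of-addition-plus-rectangle-basis argument is the one found there. No gaps: the only step needing care, intersecting $W_1$ and $W_2$ so that one set works on both sides of the sum, is handled properly.
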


\begin{theorem}\label{T:2}(\cite{engelking1978dimension} p. 17, \cite{arhangel2018some} p. 220)
The only bounded  clopen subset of  $(\mathfrak{E}, \tau)$ is the emptyset. 
\end{theorem}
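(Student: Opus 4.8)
\emph{Proof idea.} The plan is to assume, for contradiction, that some nonempty bounded clopen set $C\subseteq\mathfrak{E}$ exists and to exhibit a point of $\mathfrak{E}$ lying simultaneously in $C$ and in its complement. First I would normalise: choosing $a\in C$, the translation $x\mapsto x-a$ is a self-homeomorphism of $(\mathfrak{E},\tau)$ (it is an isometry of $\ell_2$ carrying $\mathbb{Q}^{\mathbb{N}^+}$ onto itself, since $a\in\mathfrak{E}$), so $C-a$ is again nonempty, clopen and bounded; hence I may assume $0\in C$ and fix $M>0$ with $\lVert x\rVert\le M$ for all $x\in C$. Set $D=\mathfrak{E}\setminus C$. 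Then $D$ is clopen and nonempty, since $\mathfrak{E}$ is unbounded (e.g. $(n,0,0,\dots)\in\mathfrak{E}$ for every $n$) whereas $C$ is not, so $C\neq\mathfrak{E}$.

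Next I would construct recursively a sequence $p^{0}=0,p^{1},p^{2},\dots$ in $C$ that climbs in norm as \emph{greedily} as the clopen set allows. Write $e_{k}$ for the $k$-th standard unit vector of $\ell_2$. Suppose $p^{n}\in C$ has been defined with all coordinates past the $n$-th equal to $0$, and put $\varepsilon_{n}=\operatorname{dist}(p^{n},D)=\inf_{z\in D}\lVert p^{n}-z\rVert$. Then $\varepsilon_{n}>0$ because $p^{n}$ lies in the open set $C$, and $\varepsilon_{n}<\infty$ because $D\neq\emptyset$; moreover the open ball $B(p^{n},\varepsilon_{n})$ contains no point of $D$, hence $B(p^{n},\varepsilon_{n})\subseteq C$. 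Pick a rational $t_{n+1}$ with $\tfrac12\varepsilon_{n}<|t_{n+1}|<\varepsilon_{n}$ and set $p^{n+1}=p^{n}+t_{n+1}e_{n+1}$. Since $\lVert p^{n+1}-p^{n}\rVert=|t_{n+1}|<\varepsilon_{n}$ we get $p^{n+1}\in B(p^{n},\varepsilon_{n})\subseteq C$, with $p^{n+1}\in\mathfrak{E}$ and all coordinates past the $(n+1)$-st equal to $0$. As $e_{n+1}$ is orthogonal to the coordinates already used, $\lVert p^{n+1}\rVert^{2}=\lVert p^{n}\rVert^{2}+t_{n+1}^{2}>\lVert p^{n}\rVert^{2}+\tfrac14\varepsilon_{n}^{2}$, so adding these inequalities from $p^{0}=0$ and using $\lVert p^{N}\rVert\le M$ gives $\sum_{n\ge0}\varepsilon_{n}^{2}\le 4M^{2}<\infty$; in particular $\varepsilon_{n}\to0$.

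Finally I would pass to the limit. From $\lVert p^{m}-p^{n}\rVert^{2}=\sum_{k=n+1}^{m}t_{k}^{2}\le\sum_{k>n}\varepsilon_{k-1}^{2}$ the sequence $(p^{n})$ is Cauchy in $\ell_2$, so $p^{n}\to p$ for some $p\in\ell_2$; and since coordinate $k$ receives the rational value $t_{k}$ at step $k$ and is never touched again, $p\in\mathbb{Q}^{\mathbb{N}^+}$, hence $p\in\mathfrak{E}$. Because $C$ is closed in $\mathfrak{E}$ and $p^{n}\in C$, we get $p\in C$; because $\operatorname{dist}(\cdot,D)$ is $1$-Lipschitz on $\ell_2$, we get $\operatorname{dist}(p,D)=\lim_{n}\varepsilon_{n}=0$, and as $D$ is closed in $\mathfrak{E}$ this forces $p\in D$ — contradicting $C\cap D=\emptyset$. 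Hence no nonempty bounded clopen set exists, so $\emptyset$ is the only one.

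The decisive step is the greedy bound $|t_{n+1}|>\tfrac12\varepsilon_{n}$. Without insisting on consuming a fixed fraction of the available room, the construction would merely produce a point in the interior of $C$ and prove nothing, since a priori the clearances $\varepsilon_{n}$ could shrink arbitrarily fast; greediness is exactly what makes boundedness bite, capping $\sum\varepsilon_{n}^{2}$, forcing $\varepsilon_{n}\to0$, and dragging the limit onto the closed set $D$ while it is still trapped in the closed set $C$. A smaller, routine concern is keeping the limit inside $\mathfrak{E}$ rather than letting it drift to an irrational point of $\ell_2$; changing only one previously untouched coordinate per stage arranges this automatically.
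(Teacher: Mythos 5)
The paper states this theorem as a cited background fact (Engelking, Arhangel'skii--van Mill) and supplies no proof of its own, so there is nothing internal to compare against; your argument is correct and is essentially the classical Erd\H{o}s argument that those references give. All the delicate points are handled: translating so that the greedy sequence starts at $0$ and always modifies a fresh coordinate makes the norm increments orthogonal (so boundedness forces $\sum\varepsilon_n^2<\infty$), the rational increments keep the limit inside $\mathfrak{E}$ rather than merely in $\ell_2$, and closedness of both $C$ and its complement then yields the contradiction.
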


\section{The  solution of the problem}
Let $(\mathfrak{E},\tau)$ be the topological space as in the above. Let $\mathcal{B}$ be the set of all clopen subsets of $\mathfrak{E}$, i.e., $\mathcal{B}=\{U\in \tau: \mathfrak{E}-U\in \tau\}$.  Take $\mathcal{B}$ as the base for a new topology $\tau_{clopen}$ on $\mathfrak{E}$.  Is the topology $\tau_{clopen}$ compatible with the group structure on $\mathfrak{E}$ ?  In \cite{arhangel2018some},  Question 8.9,  A. V. Arhangel’skii and J. Van Mill  asked this question. The following theorem states that the answer  of the question is negative.

\begin{theorem}
	The topology $\tau_{clopen}$ is not compatible with the group structure on $\mathfrak{E}$.
\end{theorem}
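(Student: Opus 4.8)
The plan is to derive a contradiction from the assumption that $(\mathfrak{E}, \tau_{clopen})$ is a topological group. The key leverage point is Theorem~\ref{T:2}: every bounded clopen subset of $(\mathfrak{E},\tau)$ is empty. Since the clopen sets of $(\mathfrak{E},\tau)$ form the base $\mathcal{B}$ of $\tau_{clopen}$, this says that every nonempty basic open set in $\tau_{clopen}$ is unbounded in the $\ell_2$-norm. More generally, every nonempty $\tau_{clopen}$-open set is unbounded. In particular, the zero sequence $0$, which is the identity element of the group, has no bounded $\tau_{clopen}$-neighborhood.

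First I would recall that $\tau_{clopen} \subseteq \tau$ is coarser, so $\tau_{clopen}$-open sets are in particular $\tau$-open; also, addition and negation on $\mathfrak{E}$ are already continuous for $\tau$. Suppose now, for contradiction, that $\tau_{clopen}$ makes $\mathfrak{E}$ a topological group (with its vector-space addition, as this is what ``compatible with the group structure'' means). Then by Theorem~\ref{T:1}, for any $\tau_{clopen}$-open $U \ni 0$ there is a $\tau_{clopen}$-open $V \ni 0$ with $V + V \subseteq U$. Iterating, for each $n$ there is a $\tau_{clopen}$-open $V_n \ni 0$ with $V_n + \cdots + V_n$ ($2^n$ summands) contained in $U$; equivalently $V_n + V_{n} \subseteq V_{n-1}$ with $V_0 = U$. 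The idea is to exploit this ``infinite divisibility'' of a neighborhood of $0$ together with the fact that each $V_n$, being nonempty and $\tau_{clopen}$-open, must contain points of arbitrarily large norm, and build from these a sequence that should converge (in $\tau_{clopen}$, hence in $\tau$) to $0$ but whose $\tau$-norms blow up — contradicting continuity, or more directly contradicting that a specific clopen set separates $0$ from the constructed points.

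A cleaner route, which I expect to be the main technical heart, is this: pick any nonempty clopen $C \subseteq \mathfrak{E}$ with $0 \notin C$ (such $C$ exists — e.g.\ take a clopen set far from $0$; this uses that $\mathfrak{E}$ is not connected and has clopen sets avoiding $0$, which follows from the zero-dimensionality of $\mathfrak{E}$). Then $\mathfrak{E} \setminus C$ is a $\tau_{clopen}$-neighborhood of $0$. By repeated halving via Theorem~\ref{T:1} one obtains $\tau_{clopen}$-open sets $W_n \ni 0$ with $\underbrace{W_n + \cdots + W_n}_{n} \subseteq \mathfrak{E}\setminus C$. Now choose, for each $n$, a point $w_n \in W_n$ with $\|w_n\|$ large (possible since $W_n$ is nonempty $\tau$-open — in fact contains a nonempty clopen set, which is unbounded by Theorem~\ref{T:2}); arranging $\|w_n\| \to \infty$ suitably and using that finite sums $w_n + \cdots + w_n$ stay outside $C$ while single points $w_n$ range over unbounded sets, one engineers a point of $C$ as such a sum — the contradiction. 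The delicate part is the bookkeeping: ensuring the chosen points in the $W_n$ can be combined to actually land inside the fixed clopen set $C$, which requires choosing $C$ judiciously (for instance a clopen set that, by Theorem~\ref{T:2}, must meet every sphere of large radius) and controlling the sums. I would organize this as: (i) fix a convenient unbounded clopen $C$ with $0\notin C$ and note $C$ meets all large spheres; (ii) set up the divisibility chain $W_n$; (iii) select $w_n \in W_n$ and show a suitable integer multiple lands in $C$, contradicting $mW_n \subseteq \mathfrak{E}\setminus C$.

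The step I expect to be the genuine obstacle is (iii): translating ``$W_n$ is $\tau$-open and nonempty, hence unbounded'' into the existence of a point whose integer multiples hit the prescribed clopen set $C$. This needs a quantitative form of Theorem~\ref{T:2} — namely that a nonempty clopen subset of $\mathfrak{E}$ not only is unbounded but in fact contains points of every sufficiently large norm along suitable ``rays'' in $\mathfrak{E}$ — combined with the fact that $\mathfrak{E}$ is closed under scalar multiplication by positive integers. Making the combinatorics of sums inside the nested $W_n$'s match up with a point of $C$ is where the real work lies; everything else (coarsening, Theorem~\ref{T:1} halving, passing between $\tau_{clopen}$ and $\tau$) is routine.
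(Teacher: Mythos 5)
Your overall strategy is the right one, and it is the same as the paper's at the skeleton level: by Theorem~\ref{T:2} every nonempty $\tau_{clopen}$-open set is unbounded in the $\ell_2$-norm, so by Theorem~\ref{T:1} it suffices to exhibit a single clopen set $U\ni 0$ (equivalently, a clopen $C=\mathfrak{E}\setminus U$ avoiding $0$) such that $V+V\nsubseteq U$ for every unbounded open $V\ni 0$. But that exhibition is the entire mathematical content of the problem, and your proposal defers it. You yourself flag step (iii) as ``the genuine obstacle,'' and the mechanism you sketch for it does not work: you propose to take a more or less arbitrary clopen $C$ with $0\notin C$, extract points $w_n$ of large norm from the halved neighborhoods $W_n$, and land an integer multiple $mw_n$ inside $C$. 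Nothing in Theorem~\ref{T:2} (or in any ``quantitative'' strengthening you could plausibly prove here) guarantees that a given clopen set is hit by integer multiples of points you do not get to choose freely --- the $w_n$ are constrained to lie in the $W_n$, about which you know only that they are unbounded. For a generic clopen $C$ the sums $mw_n$ can simply miss $C$; the contradiction has to be engineered by building $U$ by hand so that its membership condition is destroyed by adding a \emph{small} perturbation to a \emph{large} element.

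Concretely, the paper's witness is $O=\bigcap_n A_{\alpha_n,\beta_n}$, where $x\in A_{\alpha,\beta}$ means: once the partial norms $\sqrt{\sum_{k\le m}|x_k|^2}$ first exceed $\alpha$ (at index $m_{x,\alpha}$), all later coordinates satisfy $|x_l|<\beta$. The escape from $O$ is not an iterated sum $mw$ of one point but a sum $x+y$ of two \emph{different} elements of $V$: a point $x\in V$ of norm $>\alpha_{m^*}$ (unboundedness) plus a tiny bump $y=\pm q\,e^{l^*}$ supported on a single coordinate beyond $m_{x,\alpha_{m^*}}$, with $q<r^*$ so that $y\in B(0,r^*)\subseteq V$ but $q>\beta_{m^*}$ so that $x+y\notin A_{\alpha_{m^*},\beta_{m^*}}$. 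Note also two points your sketch does not engage with: (a) one irrational threshold pair $(\alpha,\beta)$ is not enough, since $r^*$ depends on $V$ and can be arbitrarily small, which forces the intersection over $\beta_n\to 0$ (and then $\alpha_n\to\infty$ is needed to keep $O$ open, via Claim~4 of the paper); (b) proving that $A_{\alpha,\beta}$ is clopen at all requires the irrationality of $\alpha^2$ and $\beta$ and a delicate case analysis. As written, your proposal is a correct reduction plus an unfulfilled promissory note for the one step that matters, so it does not constitute a proof.
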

\begin{proof}
(Outline of this proof: First we define a clopen subset $A_{\alpha, \beta}$ and using this set we define a clopen subset $O$ of $\mathfrak{E}$ with $0\in O$. After that, we show that $V+V\nsubseteq O$ for any clopen subset $V$ of $\mathfrak{E}$ with  $0\in V$.)

Fix  any $\alpha\in \mathbb{R}^+$ and $\beta\in \mathbb{R}^+$ with the condition that $\alpha^2\notin \mathbb{Q}$ and $\beta\notin \mathbb{Q}$. 

Take any $x=(x_1,x_2,x_3,\ldots)\in \mathfrak{E}$. If $\{m\in \mathbb{N}^+:\sqrt{\sum_{k=1}^m|x_k|^2}>\alpha\}\neq \emptyset$, then let us  say that  $m_{x,\alpha}$ \textit{exists} and  define $m_{x,\alpha}=\min\{m\in \mathbb{N}^+:\sqrt{\sum_{k=1}^m|x_k|^2}>\alpha\}$. If $\{m\in \mathbb{N}^+:\sqrt{\sum_{k=1}^m|x_k|^2}>\alpha\}=\emptyset$, then let us  say that $m_{x,\alpha}$ \textit{does not exist}. 

Now, define $$\mathfrak{E}_\alpha=\{x\in \mathfrak{E}:m_{x,\alpha} \text{ does not exist}\}
$$
and define $$A_{\alpha,\beta}=\{x=(x_1,x_2,\ldots)\in \mathfrak{E}:m_{x,\alpha} \text{  exists and } |x_l|<\beta \text{ for all } l>m_{x,\alpha}  \}\cup\mathfrak{E}_\alpha.$$

\textbf{Claim 1.} 
The open ball $B(0,\alpha)$ is a subset of $A_{\alpha,\beta}$. (Here, $0=(0,0,0,\ldots)$ is the identity element of the topological group $(\mathfrak{E},\tau)$).

\textbf{Proof of  Claim 1.}
Take any $x=(x_1,x_2,x_3,\ldots)\in B(0,\alpha)$. Then, for all $m\in \mathbb{N}^+$,  $\sqrt{\sum_{k=1}^m|x_k|^2}\leq ||x||<\alpha$. Thus, $m_{x,\alpha}$ does not exist. So, $x\in \mathfrak{E}_\alpha\subseteq A_{\alpha,\beta}$. \kare 

\textbf{Claim 2.} 
 $A_{\alpha,\beta}$ is a  closed subset of $\mathfrak{E}$, i.e., $\mathfrak{E}-A_{\alpha,\beta}\in \tau$.

\textbf{Proof of  Claim 2.}

To see $\mathfrak{E}-A_{\alpha,\beta}$ is open, take and fix  any $z=(z_1,z_2,z_3,\ldots)\in \mathfrak{E}-O$. So, $m_{z,\alpha}$ exists,    $\alpha< 
\sqrt{\sum_{k=1}^{m_{z,\alpha}}|z_k|^2}$ and there exists an  $l_0>m_{z,\alpha}$ with $|z_{l_0}|\geq \beta$ because $z\notin A_{\alpha,\beta}$. Then, $|z_{l_0}|> \beta$, because $z_{l_0}\in \mathbb{Q}$ and  $\beta \notin \mathbb{Q}$. Thus, we can define $r_0=\min \{|z_{l_0}|-\beta,\sqrt{
\sum_{k=1}^{m_{z,\alpha}}|z_k|^2} - \alpha\}$  and take the open ball $B(z,r_0)$. Take any $y=(y_1,y_2,y_3,\ldots)\in B(z,r_0)$.  

 $\sqrt{
\sum_{k=1}^{m_{z,\alpha}}|z_k|^2}-\sqrt{
\sum_{k=1}^{m_{z,\alpha}}|y_k|^2}\leq \sqrt{
\sum_{k=1}^{m_{z,\alpha}}|z_k-y_k|^2}\leq \sqrt{
\sum_{k=1}^{\infty}|z_k-y_k|^2}=||z-y||<r\leq \sqrt{
\sum_{k=1}^{m_{z,\alpha}}|z_k|^2} - \alpha$. So, $\sqrt{
	\sum_{k=1}^{m_{z,\alpha}}|z_k|^2}-\sqrt{
	\sum_{k=1}^{m_{z,\alpha}}|y_k|^2}<\sqrt{
	\sum_{k=1}^{m_{z,\alpha}}|z_k|^2} -\alpha$. Thus, $\alpha<
\sum_{k=1}^{m_{z,\alpha}}|y_k|^2$. 

So, $m_{y,\alpha}$ exists and because of the definition of $m_{y,\alpha}$, $m_{y,\alpha}\leq m_{z,\alpha}$. 

Thus, $l_0\geq m_{z,\alpha}\geq m_{y,\alpha}$ and $|z_{l_0}|-|y_{l_0}|\leq |z_{l_0}-y_{l_0}|=\sqrt{|z_{l_0}-y_{l_0}|^2}\leq \sqrt{
	\sum_{k=1}^{\infty}|z_k-y_k|^2}=||z-y||<r\leq|z_{l_0}|-\beta$. So, $|z_{l_0}|-|y_{l_0}|<|z_{l_0}|-\beta$. Thus, $\beta< |y_{l_0}|$.

Because ${m_{y,\alpha}}$ exists and there exists  $l_0>m_{y,\alpha}$ such that $\beta< |y_{l_0}|$, $y\notin A_{\alpha,\beta}$. Therefore $z\in B(z,r_0)\subseteq \mathfrak{E}-A_{\alpha,\beta}$. Hence, $A_{\alpha,\beta}$ is a  closed subset of $\mathfrak{E}$. \kare

\textbf{Claim 3.} 
$A_{\alpha,\beta}$ is an  open subset of $\mathfrak{E}$, i.e., $A_{\alpha,\beta}\in \tau$.

\textbf{Proof of  Claim 3.} Take and fix any $x=(x_1,x_2,x_3,\ldots)\in A_{\alpha,\beta}$. 
 There are two cases: $m_{x,\alpha}$ does not exist or $m_{x,\alpha}$ exists.

\textbf{Case 1:} $m_{x,\alpha}$ does not exist.

Then, $\{m\in \mathbb{N}^+:\sqrt{\sum_{k=1}^m|x_k|^2}>\alpha\}= \emptyset$. So, $\sqrt{\sum_{k=1}^m|x_k|^2}\leq\alpha$ for all $m\in \mathbb{N}^+$.
For this case either $||x||=\sqrt{\sum_{k=1}^{\infty}|x_k|^2}<\alpha$ or $||x||=\alpha$. 

If $||x||<\alpha $, then from Claim 1, $x\in B(0,\alpha)\subseteq A_{\alpha, \beta}$.

Now, suppose $||x||=\alpha$. Say $r_1=\sqrt{\alpha^2+\beta^2}-\alpha$. Then, to see the open ball $B(x,r_1)$ is a subset of  $A_{\alpha, \beta}$, take any $y=(y_1,y_2,y_3,\ldots)\in B(x,r_1)$. If $m_{y,\alpha}$ does not exist, then $y\in \mathfrak{E}_\alpha\subseteq A_{\alpha, \beta}$. If $m_{y,\alpha}$  exists, then aiming for a contradiction, suppose there exists an  $l>m_{y,\alpha}$ and $|y_l|\geq\beta$.  Then, $||y||\leq ||y-x||+||x||<r_1+||x||<\sqrt{\alpha^2+\beta^2}-\alpha+\alpha=\sqrt{\alpha^2+\beta^2}$. Therefore, $||y||<\sqrt{\alpha^2+\beta^2}$. So, $\alpha^2+\beta^2>||y||^2=\sum_{k=1}^{\infty}|y_k|^2\geq |y_l|^2+\sum_{k=1}^{m_{y,\alpha}}|y_k|^2>\beta^2+\alpha^2$. Thus, we get the contradiction $\alpha^2+\beta^2>\beta^2+\alpha^2$. From the contradiction, we get $|y_l|\leq \beta$. Becuse $|y_l|\in \mathbb{Q}$, $\beta\notin \mathbb{Q}$ and $l>{m_{y,\alpha}}$ is arbitrary, $|y_l|< \beta$ for all $l>{m_{y,\alpha}}$. Hence, $y\in A_{\alpha, \beta}$. Thus, $x\in int(A_{\alpha, \beta})$.  

\textbf{Case 2:} $m_{x,\alpha}$ exists.

Because $||x||=\sqrt{
\sum_{k=1}^{\infty}|x_k|^2}<\infty$, for $\varepsilon=\frac{\beta}{2}$ 
there exists an $l_0\in \mathbb{N}^+$ such that $\sqrt{\sum_{k=l_0}^{\infty}|x_k|^2}<\varepsilon=\frac{\beta}{2}$.  Thus, \begin{equation}\label{1}
	\sqrt{\sum_{k=l_0}^{\infty}|x_k|^2}<\frac{\beta}{2}.
\end{equation} 

\textbf{Subclaim 1.}
For any $y=(y_1,y_2,\ldots)\in \mathfrak{E}$ if $||y-x||<\frac{\beta}{2}$, then $|y_l|<\beta$ for all $l\geq l_0$. 

\newpage\textbf{Proof of Subclaim 1.}

   $-\sqrt{\sum_{k=l_0}^{\infty}|x_k|^2}+\sqrt{\sum_{k=l_0}^{\infty}|y_k|^2} \leq \sqrt{\sum_{k=l_0}^{\infty}|x_k-y_k|^2}\leq \sqrt{\sum_{k=1}^{\infty}|x_k-y_k|^2}$ $=   ||x-y||<\frac{\beta}{2}$. 
So, 
$-
\sqrt{\sum_{k=l_0}^{\infty}|x_k|^2}+\sqrt{\sum_{k=l_0}^{\infty}|y_k|^2}<\frac{\beta}{2}$.  Thus, from (\ref{1}), $\sqrt{\sum_{k=l_0}^{\infty}|y_k|^2}<\frac{\beta}{2}+\sqrt{\sum_{k=l_0}^{\infty}|x_k|^2}=\beta$. 
 Therefore,  $|y_l|\leq \sqrt{\sum_{k=l_0}^{\infty}|y_k|^2}<\beta$ where $l\geq l_0$. So, $|y_l|<\beta$ for all  $l\geq l_0$. The proof of Subclaim 1 is completed.

Now, say $$h_x=\min\{\big|\beta-|x_i|\big|:i\leq l_0\}$$ and 
	$$a_x= \begin{cases} 
	{\alpha}-
	\sqrt{\sum_{k=1}^{m_{x,\alpha}-1}|x_k|^2} & :m_{x,\alpha} >1\\
	$$1$$ & :m_{x,\alpha} =1  
\end{cases}
$$
(We don't need to case $a_x=1$. To get a well defined $r$, we write it. Note that  if $m_{x,\alpha}>1$, then ${\alpha}-
\sqrt{\sum_{k=1}^{m_{x,\alpha}-1}|x_k|^2}>0$, because $\sqrt{\sum_{k=1}^{m_{x,\alpha}-1}|x_k|^2}\leq \alpha$, so, $\sum_{k=1}^{m_{x,\alpha}-1}|x_k|^2\leq \alpha^2$. Because $\alpha^2\notin \mathbb{Q}$ and $\sum_{k=1}^{m_{x,\alpha}-1}|x_k|^2\notin \mathbb{Q}$, $\sum_{k=1}^{m_{x,\alpha}-1}|x_k|^2< \alpha^2$. Thus, $
\sqrt{\sum_{k=1}^{m_{x,\alpha}-1}|x_k|^2}<{\alpha}$.)

Now,  define $$r=\min\{\sqrt{\sum_{k=1}^{m_{x,\alpha}}|x_k|^2}-\alpha, \frac{\beta}{2}, h_x, a_x\}.$$ To see the open ball $B(x,r)$ is a subset of $A_{\alpha,\beta}$, take 
any  $y=(y_1,y_2,y_3,\ldots)\in B(x,r)$.

 $\sqrt{\sum_{k=1}^{m_{x,\alpha}}|x_k|^2}-\sqrt{\sum_{k=1}^{m_{x,\alpha}}|y_k|^2}\leq \sqrt{\sum_{k=1}^{m_{x,\alpha}}|x_k-y_k|^2}\leq \sqrt{\sum_{k=1}^{\infty}|x_k-y_k|^2}$ $=||x-y||<r\leq \sqrt{\sum_{k=1}^{m_{x,\alpha}}|x_k|^2}-\alpha$.  Thus, $\sqrt{\sum_{k=1}^{m_{x,\alpha}}|x_k|^2}-\sqrt{\sum_{k=1}^{m_{x,\alpha}}|y_k|^2}< \sqrt{\sum_{k=1}^{m_{x,\alpha}}|x_k|^2}-\alpha$ and so, $\sqrt{\sum_{k=1}^{m_{x,\alpha}}|y_k|^2} >\alpha$. Therefore, $m_{y,\alpha}$ exists and $m_{y,\alpha}\leq m_{x,\alpha}$.  

Now, we will see that $m_{y,\alpha}= m_{x,\alpha}$. If $m_{x,\alpha}=1$, then $m_{y,\alpha}= m_{x,\alpha}$ because $m_{y,\alpha}\leq m_{x,\alpha}$. If $m_{x,\alpha}>1$,  take any $n<m_{x,\alpha}$. Then,  $-\sqrt{\sum_{k=1}^{n}|x_k|^2}+\sqrt{\sum_{k=1}^{n}|y_k|^2}\leq \sqrt{\sum_{k=1}^{n}|x_k-y_k|^2}\leq \sqrt{\sum_{k=1}^{\infty}|x_k-y_k|^2}=||x-y||<r\leq a_x\leq \alpha -
\sqrt{\sum_{k=1}^{m_{x,\alpha}-1}|x_k|^2}$. 

So,  
$-\sqrt{\sum_{k=1}^{n}|x_k|^2}+\sqrt{\sum_{k=1}^{n}|y_k|^2}<\alpha-\sqrt{\sum_{k=1}^{m_{x,\alpha}-1}|x_k|^2}$. Therefore $\sqrt{\sum_{k=1}^{n}|y_k|^2}<\alpha+\sqrt{\sum_{k=1}^{n}|x_k|^2}-\sqrt{\sum_{k=1}^{m_{x,\alpha}-1}|x_k|^2}$. Because $n\leq m_{x,\alpha}-1$, $\sqrt{\sum_{k=1}^{n}|x_k|^2}-\sqrt{\sum_{k=1}^{m_{x,\alpha}-1}|x_k|^2}\leq 0$. 
Thus, $\sqrt{\sum_{k=1}^{n}|y_k|^2}<\alpha<\sqrt{\sum_{k=1}^{m_{y,\alpha}}|y_k|^2}$. So, $n<m_{y,\alpha}$. 
 Hence, $n<m_{y,\alpha}$ for all $n<m_{x,\alpha}$. Therefore, $m_{y,\alpha}\geq m_{x,\alpha}$. So, $m_{y,\alpha}= m_{x,\alpha}$.  

Fix any arbitrary $l> m_{y,\alpha}= m_{x,\alpha}$. We will see $|y_l|<\beta$. If  $l\geq l_0$, then because $ ||x-y||<r\leq \frac{\beta}{2}$, from Subclaim 1, $|y_l|<\beta$. If $l_0\geq l> m_{y,\alpha}= m_{x,\alpha}$ where $l_0> m_{y,\alpha}$, then  $-|x_l|+|y_l|\leq|x_l-y_l|\leq  ||x-y||<r\leq \big|\beta-|x_l|\big|$. On the other hand, $|x_l|<\beta$ because $x\in A_{\alpha, \beta}$, $m_{x,\alpha}$ exists and  $l>m_{x,\alpha}$. Thus, $-|x_l|+|y_l|<\big|\beta-|x_l|\big|=\beta -|x_l|$. Hence, $|y_l|<\beta$.

For Case 2, because $m_{y,\alpha}$ exists and $|y_l|<\beta$ for all $l>m_{y,\alpha}$, $y\in A_{\alpha, \beta}$. Therefore,  $B(x,r)\subseteq  A_{\alpha, \beta}$ and so, $x\in int(A_{\alpha, \beta})$.

Therefore,   $A_{\alpha,\beta}$ is an  open subset of $\mathfrak{E}$. \kare

Let $(\alpha_n)$ be a strictly increasing sequence of positive real numbers ($\alpha_1<\alpha_2<\alpha_3<\ldots$) such that $(\alpha_n)\rightarrow \infty$  and $\alpha_n^2\notin \mathbb{Q}$ for all $n\in \mathbb{N}^+$. And let $(\beta_n)$ be a strictly decreasing sequence of positive real numbers such that $(\beta_n)\rightarrow 0$  and $\beta_n\notin \mathbb{Q}$ for all $n\in \mathbb{N}^+$. Then, we can give the following claim.

\textbf{Claim 4.} Let
$O=\bigcap_{n\in \mathbb{N}^+}A_{\alpha_n,\beta_n}$. Then, the identity element $0\in O$ and $O$ is a  clopen subset of $\mathfrak{E}$, i.e., $O\in \tau$ and $\mathfrak{E}-O\in \tau$.

\textbf{Proof of  Claim 4.} From Claim 1, $0\in A_{\alpha_n,\beta_n}$ for all $n\in \mathbb{N}^+$. Thus,  $0\in O$.

From Claim 2, each $A_{\alpha_n,\beta_n}$ is closed. Thus,  $O=\bigcap_{n\in \mathbb{N}^+}A_{\alpha_n,\beta_n}$ is a  closed in $(\mathfrak{E},\tau)$.

To see $O$ is open in $(\mathfrak{E},\tau)$, take and fix any $x\in O$. Because $(\alpha_n)\rightarrow \infty$, there is an $n_0\in \mathbb{N}^+$ such that $||x||<\alpha_{n_0}$. Thus, $x\in B(0,\alpha_{n_0})$. Define $W=(\bigcap_{n\leq n_0}A_{\alpha_n,\beta_n})\cap(B(0,\alpha_{n_0}))$.  So, clearly $W$ is open, and $x\in W$  because $x\in B(0,\alpha_{n_0})$ and $x\in \bigcap_{n\in \mathbb{N}^+}A_{\alpha_n,\beta_n}$. Now, fix any $m\in \mathbb{N}^+$. We will see that $W\subseteq A_{\alpha_m,\beta_m}$.  If $m\leq n_0$, then $W\subseteq \bigcap_{n\leq n_0}A_{\alpha_n,\beta_n}\subseteq A_{\alpha_m,\beta_m}$. Suppose $m\geq n_0$. Because $(\alpha_{n})$ is strictly increasing sequence, $\alpha_{m}\geq \alpha_{n_0}$. Then,  $W\subseteq B(0,\alpha_{n_0})\subseteq B(0,\alpha_{m})$. Thus,  $W\subseteq B(0,\alpha_{m}) \subseteq A_{\alpha_{m},\beta_m}$
 because  we know that $B(0,\alpha_{m}) \subseteq A_{\alpha_{m},\beta_m}$ from Claim 1. Thus, $W\subseteq O=\bigcap_{n\in \mathbb{N}^+}A_{\alpha_n,\beta_n}$ because $m$ is an arbitrary element of $\mathbb{N}^+$. Hence, $x\in W\subseteq O$ and $W$ is open. Therefore,   $O$ is an  open subset of $\mathfrak{E}$. \kare

\textbf{Claim 5.} If $V$ is any open unbounded subset of $(\mathfrak{E},\tau)$ such that $0=(0,0,0,\ldots)\in V$, then $V+V\nsubseteq O$.

\textbf{Proof of Claim 5.}
 Fix any open unbounded subset  $V$ of $\mathfrak{E}$ such that the identity element   $0\in V$. Then, there exists an $r^*>0$ such that the open ball $B(0,r^*)$ is a subset of  $V$. We can find an $n^*\in \mathbb{N}^+$ such that $0<\frac{1}{n^*}<r^*$. Because $(\alpha_{n})\rightarrow \infty$ and $(\beta_n)\rightarrow 0$, there exist $m_1,m_2\in \mathbb{N}^+$ such that $0<\beta_{m_1}<\frac{1}{n^*}$ and $n^*<\alpha_{m_2}$.  Say $m^*=\max\{m_1,m_2\}$. Then, $\beta_{m^*}<\frac{1}{n^*}$ and $n^*<\alpha_{m^*}$.
  Because $V$ is unbounded,  there exists an $x\in V$ such that $\alpha_{m^*}<||x||$.  Thus, there exists an $m\in \mathbb{N}^+$ such that $\alpha_{m^*}<\sqrt{\sum_{k=1}^{m} |x_k|^2}$. Therefore, $m_{x,{\alpha_{m^*}}}$  exists. Now, fix any $l^*>m_{x,{\alpha_{m^*}}}$ and  a rational number $q$ such that $\frac{1}{\beta_{m^*}}<q<r^*$. Let $e^{l^*}=(e_1,e_2,\ldots)\in \mathfrak{E}$ such that $e_{l^*}=1$ and $e_k=0$ where $k\neq {l^*}$.
 
 \textbf{Case 1:} $x_{l^*}\geq 0$.
 
  Say $y=q.e^{l^*}$. So, $y_{l^*}=q$ and $y_k=0$ for all $k\neq {l^*}$ where $y=(y_1,y_2,y_3,\ldots)$. Then, $y\in B(0,r^*)$ because $||y-0||=||y||=q<r^*$. So, $y\in V$. Now, define $z=x+y$. Thus, $z\in V+V$. Also $\sqrt{\sum_{k=1}^{m_{x,{\alpha_{m^*}}}} |z_k|^2}=\sqrt{\sum_{k=1}^{{m_{x,{\alpha_{m^*}}}}} |x_k|^2}>\alpha_{m^*}$. So, $m_{z,{\alpha_{m^*}}}$ exists and from definition of $m_{z,{\alpha_{m^*}}}$, $m_{z,{\alpha_{m^*}}}\leq m_{x,{\alpha_{m^*}}}\leq l^*$.  
  Thus, $|z_{l^*}|=|x_{l^*}+q|=x_{l^*}+q> \frac{1}{\beta_{m^*}}$. So, $z\notin A_{\alpha_{m^*},\beta_{m^*}}$. Therefore, $z\notin O$. Hence  $z\in V+V$ and $z\notin O$. 

\textbf{Case 2:} $x_{l^*}<0$.

 Say $y=-q.e^{l^*}$. Define $z=x+y$, in a similar manner to Case 1, $z\in V+V$ and $z\notin O$. 
 
 Therefore $V+V\nsubseteq O$. \kare
 
 From Claim 4 and Claim 5, there is a clopen subset $O$ of $(\mathfrak{E},\tau)$  with $0\in O$ such that $V+V\nsubseteq O$ if  $V$ is any open unbounded subset  of $(\mathfrak{E},\tau)$ with $0\in V$. From Theorem \ref{T:2}, if $V$ is any clopen  subset  of $(\mathfrak{E},\tau)$ with $0\in V$, then $V$ is unbounded.  Therefore,  $O\in \tau_{clopen}$ with $0\in O$ such that  $V+V\nsubseteq O$ for any $V\in \tau_{clopen}$ with $0\in V$.  Hence, from Theorem \ref{T:1}, the topology $\tau_{clopen}$ is not compatible with the group structure on $\mathfrak{E}$.
 
\end{proof}
 In Claim 5 which is in the proof above, for the clopen set $O$, actually we showed  that if $K$ is unbounded subset of $\mathfrak{E}$,  then $K+B(0,\varepsilon)\nsubseteq O$
 for any $\varepsilon>0$.
\bibliographystyle{plain}

\end{document}